\definecolor{rasp}{rgb}{.89,.04,.36}      
\pgfplotsset{compat=newest}
\newtheorem{theorem}{Theorem}[section]
\newtheorem{lemma}[theorem]{Lemma}
\title{ Model-Order Reduction For Hyperbolic Relaxation Systems
}
\date{\today}
\author{Sara Grundel \footnote{grundel@mpi-magdeburg.mpg.de}\\ 
	{\small\it Max-Planck-Institut f\"{u}r Dynamik komplexer technischer Systeme} \\
	{\small\it Sandtorstr. 1, 39106 Magdeburg, Germany
	}\and  
	Michael Herty \footnote{herty@igpm.rwth-aachen.de} \\
	{\small\it Institut f\"{u}r Geometrie und Praktische Mathematik (IGPM)} \\
	{\small\it RWTH Aachen University} \\
	{\small\it Templergraben 55, 52062 Aachen, Germany
}
}
\newcommand{\R}{\mathbb{R}}
\begin{document}
	\selectlanguage{english}
	\date{\today}
	\maketitle
	\begin{abstract}
	We propose a novel framework for model-order reduction of hyperbolic differential equations. The approach combines a relaxation formulation of the hyperbolic equations with a discretization using shifted base functions.  Model-order reduction techniques are then applied to the resulting system of coupled ordinary differential equations. On computational examples including in particular the case of shock waves we show the validity of the approach and the performance of the reduced system.   
	\end{abstract}
\section{Introduction}

Model-order reduction has been successfully applied to large-scale systems of ordinary differential equations as well as problems
governed by elliptic or parabolic differential equations, see e.g. \cite{Quarteroni2011,Haasdonk2008,haasdonk2008reduced,morAntSG01,morAntBG20,morMadPT02,morGre05}. 
There is a large variety of methods out there for these problem classes and all of them base on the idea that the solution space as a subset of either a large finite dimensional space or possibly an infinite dimensional function space is well approximated by a finite dimensional linear subspace of relatively low dimension. There are several different methods to determine a suitable subspace and several  methods to use it for a reduced order model. Some model order reduction methods only take the description of the system to create the projection onto that subspace, and some use  data created from solving the full system. A crucial point in the interest and usefulness of a reduced model is that one is not interested in one single solution for one single equation but for a collection of solutions or equations. Sometimes this collection is created by a parameter in the differential equation, sometimes by a varying input function or  by considering different starting values.

A way  to quantify how reducible an equation is can be done by understanding how well the solution space is approximated by the best  $n$-dimensional linear subspace. This concept is referred to as the Kolmogorov $n$-width in  the literature. This is also studied for specific hyperbolic problems and the best approximation space in this setting is not satisfying. Therefore unfortunately we need to rethink the general strategy for \textbf{nonlinear} hyperbolic problem. So far a general method is not available. Several approaches have been proposed to provide a suitable finite dimensional approximation space. In particular, in the case of linear hyperbolic system the solution can be expressed as linear semigroup on suitable spaces and therefore an approximation by finite dimensional subspaces is feasible \cite{himpe2014,morSarGB20,morMoSTetal01,morXiaM13,laakmann2021efficient}. For linear hyperbolic problems the transport speed is constant and known a priori. This allows to exploit the idea of shifted base functions. Several different approaches exist and they have partially been extended to the nonlinear case  \cite{morMoSTetal01,morXiaM13,reiss2018shifted,haasdonk2008reduced}.  In the nonlinear case a major obstacle has been the loss of regularity of the solutions in the presence of shocks. Those  also move at a speed determined through a possibly nonlinear relation out of the solution itself. This time-dependence in the approximate finite dimensional space has been dealt with by time dependent space transformation as part of the reduced model. There is a large body of literature addressing different solutions to this well established problem  \cite{PeterBook,RBHyp,Benjamin2018model, Welper2017,Metric2019,Nair,Cagniart2019}.  They all use very different ways to deal with the creation of a non-linear subspace approximating the solution space. 
\medskip 

We propose  a method to treat loss of regularity due to shocks as well as the nonlinear transport speed. To that end we  first lift the solution space and then find  a linear subspace exploiting known techniques. The lifting is done in  two steps, first a hyperbolic relaxation \cite{Jin1995b,Natalini1999,Bianchini2006} and then a discretization using suitable spacetime Ansatz functions. The hyperbolic relaxation methods use a suitable reformulation of the nonlinear flux at the expense of an enlarged system. This in turn allows to keep possible discontinuous solutions but reduces the transport part to a linear transport. The linear part ensures further that the new system formally has  fixed transport speeds. The latter system is therefore amendable for treatment within model order reduction as shown in this work.  We propose to capture the movement of discontinuities by a suitable {\em moving} approximations. On those approximations we perform a suitable model order reduction. Based on the continuous formulation we discuss possible numerical discretizations and show computational results in the case of shocks.   

\setcounter{tocdepth}{1}
\tableofcontents

\section{Reducability of Scalar non-linear hyperbolic equations}

We consider a scalar nonlinear hyperbolic differential equation for the unknown $U=U(t,x)$ on the torus $\mathbb{T}=[-1,1] \subset \R$  as solution to  
\begin{align}\label{equation} 
\partial_t U(t,x) +\partial_x f(U(t,x)) = 0
\end{align}
subject to the initial conditions $u_0: \mathbb{T} \to \R$ 
\begin{align}
\label{ic}
u(0,x)=u_0(x).
\end{align}
The flux function $f \in C^2(\R;\R)$ is assumed to be  nonlinear.  
Even for smooth initial data \eqref{ic} $u_0$ the solution $u$ may exhibit discontinuities in finite time \cite{Dafermos:2005aa}.  Therefore,
weak entropy solutions to \eqref{equation} have been introduced and we refer
to \cite{Dafermos:2005aa} for more details on well-posedness of weak solutions. 
This presentation is concerned with finding a reduced model to this system in the sense of approximating  the solution on a lower dimensional manifold. For (linear) elliptic differential equations the lower dimensional manifold can be shown to be a linear subspace and the model-order reduction can be successfully applied. However, for nonlinear hyperbolic systems this approach is not straight forward as  already mentioned in the introduction. For general nonlinear problems the typical way to create a reduced order model is  to first solve the system  at certain instances (in time) using a   high dimensional solution technique. This information is used  to define a linear subspace of the solution space which becomes the search space in which the equation is then solved resulting in a so called reduced system, which is then used to approximate the solution for different parameters or different input functions.  For our setting we assume the flux function $f$ is given, however the initial condition given by $u_0$ could vary. Therefore, a suitable reduced modeling technique should allow to generate  a reduced system  which is able to approximate the solution to the original equation for different initial conditions.
\par 
In order to derive the discretization with the space-time ansatz function on the relaxation  we consider as an example the linear case first where we already have a linear transport operator. Let 
\begin{align}
 f(U)=\lambda U \end{align}  
 with coefficient $\lambda \not = 0 $. The explicit solution to \eqref{equation}, 
 \eqref{ic} on $\mathbb{T}$ is given by 
 \begin{align}\label{explicit}
 U(t,x)=u_0(x-\lambda t ). 
 \end{align}

 Classical model order reduction of partial differential equations is based on the idea that the numerical solution is computed as an approximation of the form
 \begin{align}
 U(t,x)\approx \sum u_j(t)\phi_j(x),
 \end{align}
 for a set of basis functions $\phi_j$, like for example a  finite element space. In general reduced solutions to the PDE are also described in a similar fashion but with different  basis functions, which are picked in such  a way that we do not need so many by exploiting the structure of the given equation. In other words model reduction  tries to extract a lower dimensional space within the FE space which represents the solution of the given problem well. Assuming that we choose $u_0$ to be a compactly supported local finite element basis function, the solution $U(t,x)$ which is just the transported $u_0$ has a support that moves through the entire space over time. The collection of this functions evaluated at discrete time instances would fast span a large dimensional space within the finite element space leaving not  too much hope that we can find a low dimensional subspace. This has been recognized as a problem for hyperbolic systems for a while \cite{RBHyp} and a few techniques have been used to overcome that. The most promising approach being to use an Ansatz where the basis function contain a time dependent spatial shift. For linear problem as the speed is clear this can be done easily, and for nonlinear the spatial transformation is part of the hard work of finding  the right reduced system and is still a work in progress, but with progress for certain problems \cite{Pal87}. In this paper we use this idea of the spatial shift  not to create the right reduced order model but to discretize the full model in order to get a large scale ordinary differential equation that no longer suffers from the transport phanomena.  Our large dimensional ansatz space is given by a set of basis function $\phi_j$ but evaluated at a fixed spatial shift:
 
  \begin{align}\label{final linear}
 U(t,x) = \sum\limits_{j=1}^N u_{j}(t) \phi_j(x-\lambda t). 
 \end{align}
 And in fact if the initial condition   $u_0$ is expanded in a truncated series of $N$ coefficients 
 \begin{align}\label{u0:lin}
 u_0(x) = \sum\limits_{j=1}^N u_{0,j} \phi_j(x) ,
 \end{align} 
 for some functions $\{  \phi_j \}_{j=1}^N$. The explicit solution \eqref{final linear} then yields  the {\em exact }
  solution for $u_j(t)=u_{0,j}$ on the linear transport problem.

 If we choose a  function $u_0(x)$ as the correct linear combination of our linear subspace as in equation \eqref{u0:lin} and we consider the solution $u(t,x)$ within the one-dimensional manifold spanned by $u_0$
 \begin{align}
 \{ u(x,t) = u_0(x-\lambda t \}
 \end{align}
 we obtain the exact solution. While this approach can be extended to linear transport equations with nonlinear right-hand side, as e.g. 
 \begin{align}\label{eq:linrhs}
 \partial_t u(t,x)+ \lambda \partial_x u(t,x)=g(u(t,x)),
\end{align} 
 this approach however does {\bf not} extend to a non-linear equations.  It is important to note that the previous approach only  works because $\lambda$ is constant. However, in 
 the case of nonlinear flux $U \to f(U)$ the characteristic  $\frac{dx}{dt}(t)$ depends
  on the value of the initial datum $u_0$  at $x_0:$
\begin{align}
\frac{dx}{dt}=f'(U(t,x(t)),\;  x(0)=x_0 \mbox{ and } U(t,x(t))=u_0(x_0).
\end{align}
In the nonlinear case it is challenging to determine the correct shift. There is a big effort to do so  in the literature and for certain problems this has been applied successfully~\cite{Benjamin2018model,Welper2017,Metric2019,Nair,Cagniart2019}. In the following we want to develop a general method allowing to have a fixed shift in the base functions.

\section{Semi--Discretization Compatible to Model Order Reduction
}

Our approach is somewhat more robust with respect to the type of nonlinear function and the initial condition used as we do not have to track the speed or possibly more than one speed if the waves travel in different directions. The first ingredient is a stiff relaxation approximation \eqref{relax} considered e.g. in \cite{Liu2001,Chalabi:1999aa,Aregba-Driollet1996,Jin1995b,Natalini1999,Bianchini2006,Yong:2008aa,Pareschi2003,Pareschi2005,Klar1999}. 
  For the scalar problem\eqref{equation} the relaxation approximation reads
 \begin{align}\label{relax}
\partial_t u(t,x)+\partial_x v(t,x) &=0\\
\partial_t v(t,x)+\lambda^2\partial_x u(t,x) &=-\frac{1}{\epsilon}(v(t,x)-f(u(t,x))). 
\end{align}
Here, $\lambda>0$ is a positive fixed parameter that fulfills the subcharacteristic condition 
\begin{align}
\label{subcharacteristic}
\lambda \geq \max\limits_{x \in \mathbb{T}} |f'(u_0(x)) |
\end{align}
and $\epsilon>0$  is the (small) relaxation parameter. At the expense of an additional variable $v=v(t,x)$ 
the relaxation system \eqref{relax} introduces a {\em linear, hyperbolic} approximation to equation \eqref{equation}. 
Using a Chapman--Enskog expansion in $\epsilon$ a formal computation shows that 
\begin{align}\label{viscous}
\partial_t u(t,x) + \partial_x f(u(t,x)) = \epsilon \partial_x \left(   (\lambda^2 - f'(u(t,x))^2) \partial_x u(t,x) \right) + O(\epsilon^2).
\end{align}
Hence, $u$ given by \eqref{relax} is a viscous approximation to the  solution $U$ of equation \eqref{equation}. However, 
it needed to be pointed out that \eqref{relax} is linear hyperbolic and therefore a similar decomposition as shown above
might be possible. 
\par 
The eigenvalues of the linear part in equation \eqref{relax}  are $ \lambda$ and $-\lambda$, respectively. For small values of $\epsilon$  we expect $v \approx f(u)$ and therefore we set the following initial conditions for $(u_0,v_0)$ 
\begin{align}\label{ic relax}
u(0,x)=u_0(x) \mbox{ and } v(0,x)=f(u_0(x)).
\end{align}

Diagonalizing the system \eqref{relax} using the 
variables 
\begin{align}
w^\pm(t,x)=v(t,x) \pm \lambda \; u(t,x)
\end{align}
 and 
\begin{align}\label{trafo}
v(t,x)=\frac{1}{2}(w^+(t,x) +w^-(t,x)), \;
u(t,x)=\frac{1}{2\lambda} (w^+(t,x) - w^-(t,x) ), 
\end{align}
 respectively, yields the following system
 \begin{align} \label{w eq}
 \partial_tw^+  + \lambda \partial_x w^+ =-\frac{1}{\epsilon}\left ( \frac{w^++w^-}{2}-f\left (\frac{w^+-w^-}{2\lambda}\right)\right),\\
 \partial_tw^-  - \lambda \partial_x w^- =-\frac{1}{\epsilon}\left ( \frac{w^++w^-}{2}-f\left (\frac{w^+-w^-}{2\lambda}\right)\right),
 \end{align}
Their  corresponding initial conditions are
\begin{align}\label{ic w}
w^+(0,x)=f(u_0(x))+\lambda u_0(x)\quad w^-(0,x)=f(u_0(x))-\lambda u_0(x). 
\end{align}
Following the procedure of the linear case we introduce 
 $\{ \phi_j(\cdot) \}_{j=1}^N$ a set of $N$ differentiable functions $\phi_j: \mathbb{T} \to \R$ for $j=1,\dots, N.$  
The initial data $w_0^\pm$ is then expanded using the truncated series 

\begin{align}\label{ic w_e}
	w^\pm_0(x)= \sum\limits_{j=1}^N  \alpha^\pm_j(t) \phi_j(x), 
\end{align}
and the solution is expanded using the translated base functions 
 \begin{align}\label{Ansatz}
w^+(t,x) \approx \sum\limits_{j=1}^N   \alpha^+_j(t) \phi_j(x-\lambda t) \mbox{ and } 
w^-(t,x) \approx \sum\limits_{j=1}^N   \alpha^-_j(t) \phi_j(x+\lambda t), 
\end{align}
respectively. Note that in the case $f(u)=a u$ we in fact have that \eqref{Ansatz} is exact.  However, 
due to the nonlinearity of the right-hand side of \eqref{w eq} and contrary to the linear case 
the previous ansatz \eqref{Ansatz} is in general  {\em not} 
the exact solution to \eqref{w eq} and \eqref{ic w}. 
\par 
A series expansion of the original variables $(u,v)$ is obtained applying the linaer transformation \eqref{trafo}. Hence, using ansatz \eqref{Ansatz} in equation \eqref{relax}
we obtain 
\begin{align}
	\partial_t u + \partial_x v &=\frac{1}{2\lambda} \left( 
	\sum_{j=1}^N \dot{\alpha}^+_j(t)\phi_j(x-\lambda t)-\sum_{j=1}^N \dot{\alpha}^-_j(t)\phi_j(x+\lambda t) \right)=0,\label{linear1}
	\\
	\partial_t v +\lambda^2  \partial_x u &= \frac{1}{2}
	\left(\sum_{j=1}^N \dot{\alpha}^+_j(t)\phi_j(x-\lambda t)+\sum_{j=1}^N \dot{\alpha}^-_j(t)\phi_j(x+\lambda t) \right)=- \frac{1}{\epsilon}(v-f(u)),  \label{temp1}
\end{align}
where we did not expand $v$ and $u$ in terms of $\phi_j$  in the right-hand side of equation \eqref{temp1} for the sake of readability.
Define  the family of  matrices $t \to M(t) \in\R^{N,N}$ 
by 
\begin{align}
\label{mass matrix}
M_{jk}(t) = \int\limits_{ \mathbb{T}} \phi_j(x) \phi_k(x+2\lambda t) dx, \; t \geq 0,
\end{align}
and the projected initial data $b_k^\pm$ for $k=1,\dots,N$ as 
\begin{align}
	b_k^\pm = \int_{\mathbb{T}} \phi_k(x) \left( f\left(u_0(x)\right) \pm \lambda u_0(x) \right) dx. 
\end{align}
Then, the following system for the evolution of the coefficients $\alpha^\pm = (\alpha^\pm_j)_{j=1}^N$ is obtained
\begin{align}
M(0)\dot{\alpha}^+(t)  - M(t)\dot{\alpha}^-(t) &=0\label{eq:ode1}\\
M(0)\dot{\alpha}^+(t) +M(t) \dot{\alpha}^-(t) &= - \frac{2}{\epsilon} \left( 
\frac12 \left(M(0)\alpha^+(t) +M(t) \alpha^-(t) \right) - \tilde{F}(t, \alpha^\pm(t) )
\right) \label{eq:ode2} , 
\end{align}
where $\tilde{F}=(\tilde{F}_1,\dots,\tilde{F}_N)$ and 
\begin{align}
\tilde{F}_j(t, \alpha^\pm(t)) &:= \int_{\mathbb{T}}   \phi_j(x) f(   
\tilde{u}(t, x+\lambda t)   )dx   \label{tildeF},  \\
\tilde{u}(t,x) &:= \frac{1}{2\lambda} \left(\sum\limits_{j=0}^N \alpha_j^+(t) \phi_j(x-\lambda t ) -\alpha_j^-(t) \phi_j(x+\lambda t) \right).  
\label{tildeu}
\end{align} 
This is a result of multiplying \eqref{temp1} and \eqref{linear1}  by $\phi_j(x-\lambda t)$ for all $j$ and integrating it over $x$ on $\mathbb{T}$.
The initial data is given by
\begin{align}\label{ic red}
M(0)\alpha^+(0)=b^+ \mbox{ and } M(0)\alpha^-(0)=b^-,
\end{align}
which also follows from multiplying by said basis function and integration. 
Summarizing, for fixed $N$ and $\epsilon>0$, the stiff system \eqref{eq:ode1}-\eqref{eq:ode2} and \eqref{ic red} determine  the coefficients $\alpha^\pm(t)$ and $u$ given by equation \eqref{Ansatz} and \eqref{trafo}, i.e., 
\begin{align}\label{u relax N}
u^N(t,x) = \frac{1}{2\lambda} \left(\sum\limits_{j=0}^N \alpha_j^+(t) \phi_j(x-\lambda t ) -\alpha_j^-(t) \phi_j(x+\lambda t) \right).  
\end{align}
Note that it is not clear a priori if $M(t)$ for $t\geq 0$ is invertible and therefore the governing equations are not necessarily an ordinary differential equation, but possibly a differential algebraic equation. This point will be discussed in more detail in the forthcoming section. For the further considerations assume

\begin{align}\label{asss0}
(Assumption) \; \forall t \geq 0: \; M(t) \mbox{ is invertible. }
\end{align}

Summarizing, under assumption \eqref{asss0} the system \eqref{eq:ode1}-\eqref{eq:ode2} with initial conditions \eqref{ic red} yield the approximation \eqref{u relax N} to the solution $U=U(t,x)$ of the nonlinear conservation law \eqref{equation} on $\mathbb{T}$.  The proposed approximation \eqref{u relax N}  contains different approximation errors that have to be addressed in a numerical scheme. 
First, the solution is projected on the space spanned by the $N$ functions $\phi_j.$ Since we expect discontinuities the choice of suitable
functions $\phi_j$ is critical to the approximation error. Second, the derivation shows that $u^N$ given by \eqref{u relax N} in fact approximates the relaxation solution $u$ to the system \eqref{relax} for some fixed $\epsilon$. However, analytically, the sequence of weak solution $u^\epsilon$ to  equation \eqref{relax} converges weakly to the weak solution $U$  to equation \eqref{equation} as $\epsilon \to 0$ \cite{Bianchini2006}. The interplay of the obtained numerical errors with the choice of the parameters $\epsilon$ and $N$ will be investigated in the numerical results below.

\subsection{Properties of the system \eqref{eq:ode1}-\eqref{ic red} }

\renewcommand{\a}{\alpha}

Using the notation $\a = (\alpha^+, \alpha^-)$‚ we obtain 
\begin{equation}\label{reform}
	\begin{pmatrix}
		M(0) & - M(t) \\ M(0) & M(t) 
	\end{pmatrix} \frac{d}{dt}\a(t) =-\frac{1}\epsilon \begin{pmatrix} 0 \\  [M(0), M(t)] \a(t) -2 \tilde{F}(t,\a(t))  \end{pmatrix}. 
\end{equation}
The left hand side of equation \eqref{reform} consists of a $2\times 2$ block matrix. This matrix is invertible provided that  for all $t \geq 0$ 
$M(t)$ is invertible. In this case the inverse is explicitly given by 
\begin{equation}\label{inv R}	\frac12 \begin{pmatrix}
	M^{-1}(0) &  M^{-1}(0) \\ M^{-1}(t) & M^{-1}(t) 
\end{pmatrix} \end{equation}
 By suitable choice of $\{ \phi_j(\cdot) \}_j$ we can guarantee that $M(0)$ is invertible. In fact, if for all $j,k =1,\dots, N$ 
\begin{equation}\label{asss1}
	\int_{\mathbb{T}} \phi_j(x) \phi_k(x) dx = \delta_{j,k} 
\end{equation}
holds true, then $M(0)$ is the identity matrix. Provided that $\phi_j$ is continuously differentiable we obtain under assumption \eqref{asss1} that $M(t)$ is invertible for $t>0$ sufficiently small. Then,  we obtain local existence and uniqueness of solutions $\a$.  However, the following simple example shows that $M(t)$ is not necessarily invertible for all $t>0.$ Consider $\mathbb{T}=[-1,1]$, $N=2$, $2 \lambda=1$,  $\phi_1(x)=\sin(x \pi)$ and $\phi_2(x)=\sin(2 x \pi).$ Then, $M(0)=Id$ and $M(\frac12)=\begin{pmatrix} 0 & 0 \\ 0 & -1 \end{pmatrix}.$ 

\subsubsection{Case of Compactly Supported Translated Base Functions} 
Consider a compactly supported function $\phi_0: \mathbb{T} \to \R.$ For fixed $\Delta x=\frac{2}{N}$ sufficiently small, define the family of base functions 
\begin{equation}\label{basis einfach}
	\phi_j(x):=\phi_0\left( x - (j-1) \Delta x \right), \; j=1,\dots, N. 
\end{equation}
By definition of $\phi_j$ the base functions fulfill $\phi_j(x) = \phi_k\left(  x - (j-k)\Delta x \right).$ For $j=1,\dots, N, k=2,\dots,N$ we have 
\begin{align}\label{M definition}
M_{j,k}(t) = \int\limits_{ \mathbb{T}} \phi_j(x) \phi_k(x+2\lambda t) dx
=\int\limits_{ \mathbb{T}} \phi_j(x) \phi_{k-1}(x+2\lambda t-\Delta x) dx
=M_{j,k-1}\left( t-\frac{\Delta x}{2\lambda} \right) 
\end{align}
which implies that 
\begin{equation} M(t) = P^k \; M\left(t- k \frac{\Delta x}{2\lambda}
	\right), \;  k=1,\dots, \mbox{ and }  t \in \left[ k \frac{\Delta x}{2\lambda}, (k+1) \frac{\Delta x}{2\lambda} \right].  
	\end{equation} 
The permutation  matrix $P$ is  given by  
\begin{eqnarray}
	P_{i, mod(j+1,N)} = \delta_{i, j}, \; i,j=1,\dots N.
\end{eqnarray}
Hence, the family of matrices $M(t)$ for all $t\geq 0$ 
 is uniquely defined by 
$t \to M(t)$ for $t \in [0, \frac{\Delta x}{2\lambda}).$ 
Since $\phi_0$ is defined on $\mathbb{T}$ we obtain that $M(t)$ is a circulant matrix, i.e., for $i,j=1,\dots,N,$
\begin{equation}
	M_{i,j}(t)=M_{ mod(i+1,N), mod(j+1,N) }(t).
\end{equation}
The family of matrices $M(t)$   therefore uniquely defined by a family of vectors $\vec c=\vec c(t) \in \R^N$ with $c_j(t)=M_{1,j}(t)$ for $j=1,\dots,N$ and $t\geq 0.$ For circulant matrices the eigenvalues $\Lambda_M$ and $m=0,\dots,N-1$ are 
\begin{equation}\label{eigenvalues}
	\Lambda_m(t)=\sum\limits_{k=0}^{N-1} c_{k+1} (t)\; \exp\left( - 2\pi i \; \frac{ m k}N \right).
\end{equation}
The explicit eigenvalues \eqref{eigenvalues} determine possible $t$ such that $M(t)$ is not invertible. 
We illustrate this on two examples. Let $\phi_0(x)=\chi_{[- \frac{\Delta x}2, \frac{\Delta x}2]}(x)$ and let  $\phi_j$ be defined by equation \eqref{basis einfach}. Then, there exists $\Delta x>0$ and $N$ such that  the support $\Omega_j:=supp_x \phi_j(x)$ fulfills 
\begin{equation}\label{intersec}
	\Omega_i \cap \Omega_j = \emptyset, \; i \not= j, \mbox{ and } \cup_{j=1}^N \Omega_j = \mathbb{T}. 
\end{equation}
For this choice of $\{ \phi_j \}_{j=1}^N$ the vector $c(0)=(\Delta x,0, \dots, 0)^T$  and $c\left(\frac{\Delta x}{4 \lambda }\right)=\frac{\Delta x}2 (1,1,0,\dots,0)^T$. Hence, if $N$ is even, then  $\Lambda_m\left(\frac{\Delta x}{4 \lambda }\right)=0$ for $m=\frac{N}2$ and hence $M\left(\frac{\Delta x}{4 \lambda }\right)$ is not invertible. 
\par 
Similarly, if the support of $\phi_0$ is of size $\Delta x,$ i.e., $\phi_0(x)=\chi_{[- \Delta x, \Delta x] }(x)$, then $c(0)=(c_0,c_1,c_2,0,\dots, 0)^T$ with $c_0>c_j>0, j=2, 3$ and $M(0)$ is invertible. However, at time $t=\left(\frac{\Delta x}{4 \lambda }\right)$ and  $N \geq 4$ even,     we obtain $\Lambda_m=0$ for $m=\frac{N}4$.   

In the following we discuss properties of the matrix $M$ for the basis functions used in the numerical results later on. Hence, from now on we assume that $\phi_0$ is given by 

\begin{align}\label{phihut}
\phi_0(x)=\left\lbrace\begin{matrix}2x &x\in [0,\Delta x]\\4\Delta x-2x & x\in[\Delta x, 2\Delta x] \\ 0 & x \not \in [0, 2 \Delta x] \end{matrix}\right.
\end{align}
and $\phi_j$ for $j \geq 1$ are given by \eqref{basis einfach}.
There is an easy equivalence for when the  circulant matrix is singular.
  \begin{theorem}[\cite{chen2021}]\label{thm:circ}A circulant matrix made from the vector $c=[c_0,c_1,...,c_n]$ is singular if and only if $f(x)=\sum_{i=0}^{n-1}c_ix^i$ and $1-x^n$ have a common zero.
 \end{theorem}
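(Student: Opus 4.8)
The plan is to read off the result directly from the explicit spectrum of a circulant matrix, which the paper has already recorded in \eqref{eigenvalues}, and to reinterpret that spectrum as the evaluation of $f$ at the $n$-th roots of unity. First I would recall the standard diagonalization: an $n\times n$ circulant matrix generated by $c=(c_0,\dots,c_{n-1})$ has the vectors $(1,\zeta,\zeta^2,\dots,\zeta^{n-1})^T$ with $\zeta^n=1$ as eigenvectors, with associated eigenvalue $\sum_{k=0}^{n-1} c_k \zeta^k = f(\zeta)$. Setting $\zeta_m = e^{-2\pi i m/n}$ for $m=0,\dots,n-1$, formula \eqref{eigenvalues} is precisely $\Lambda_m = f(\zeta_m)$, so that the spectrum of the circulant matrix equals $\{\,f(\zeta):\zeta^n=1\,\}$.

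Next I would use the elementary characterization of singularity through the determinant: the circulant matrix is singular if and only if $\det = \prod_{m=0}^{n-1}\Lambda_m = \prod_{m=0}^{n-1} f(\zeta_m)$ vanishes, which happens if and only if $f(\zeta_m)=0$ for at least one index $m$. The crux of the argument is then the trivial but decisive observation that the collection $\{\zeta_0,\dots,\zeta_{n-1}\}$ of $n$-th roots of unity is exactly the zero set of the polynomial $1-x^n$. Consequently, the statement ``$f(\zeta_m)=0$ for some $m$'' is literally identical to the statement ``$f$ and $1-x^n$ have a common zero'', and this single equivalence establishes both implications of the theorem at once.

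The main obstacle is essentially bookkeeping rather than substance: one must verify that the map $m\mapsto \zeta_m$ enumerates all $n$ roots of unity exactly once, so that no eigenvalue is overlooked and the two zero sets genuinely coincide (this also matches the count $n$ of roots of $1-x^n$ against the $n$ eigenvalues). If one prefers a purely algebraic viewpoint, the same conclusion follows by recognizing $\det$ as, up to a sign and the leading coefficient, the resultant $\mathrm{Res}(1-x^n,\,f)$, whose vanishing is equivalent to the existence of a common root by the standard resultant criterion over $\mathbb{C}$; this reformulation is optional and the root-of-unity computation above is the shortest path.
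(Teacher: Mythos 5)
Your proof is correct. Note that the paper itself gives no proof of Theorem~\ref{thm:circ} at all --- it is imported as a quoted result from \cite{chen2021} --- so there is nothing to compare line by line; what can be said is that your argument is the standard one and rests on exactly the machinery the paper has already set up: formula \eqref{eigenvalues} is precisely the identity $\Lambda_m=f(\zeta_m)$ with $\zeta_m$ ranging over the $n$-th roots of unity, so the determinant $\prod_{m}f(\zeta_m)$ vanishes if and only if $f$ and $1-x^n$ share a zero. Your argument is rigorous as stated (the circulant is unitarily diagonalized by the Fourier matrix, so the determinant really is the product of the $\Lambda_m$), and the optional resultant reformulation is also valid since $1-x^n$ has the full set of roots of unity as its zero set. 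One cosmetic repair you implicitly make and should keep: the statement writes the generating vector as $[c_0,c_1,\dots,c_n]$, which has $n+1$ entries, while the polynomial only involves $c_0,\dots,c_{n-1}$; the intended reading, and the one your proof uses, is an $n\times n$ circulant built from $(c_0,\dots,c_{n-1})$.
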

 The matrix $M(t)$ resulting from the given basis function is non-singular almost everywhere. It is only non singular at discrete time instances and then there is only one zero eigenvalue:
 
 \begin{theorem}
 The matrix $M(t)$ given by \eqref{M definition} for $\phi$ given by \eqref{phihut} is non-singular on the interval $[0,\frac{1}{\lambda N}]$ as long as $t\neq t^*=\frac{1}{2\lambda N}$ and the nullspace at $t^*$ is only one-dimensional.
 \end{theorem}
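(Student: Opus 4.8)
My plan is to reduce the statement entirely to locating the zeros of the circulant eigenvalues \eqref{eigenvalues}, viewed as functions of the single shift $s=2\lambda t$. On the interval in question $s$ runs over $[0,\Delta x]$ (since $\tfrac1{\lambda N}=\tfrac{\Delta x}{2\lambda}$ and $t^\ast$ corresponds to $s=\tfrac{\Delta x}{2}$). First I would record that, because $\phi_0$ from \eqref{phihut} is supported on $[0,2\Delta x]$, the generating vector of $M(t)$ has (for $N\ge4$) at most the four nonzero entries $g_1:=g(s)$, $g_2:=g(\Delta x-s)$, $g_3:=g(2\Delta x-s)$, $g_4:=g(s+\Delta x)$, where $g(\tau)=\int_{\mathbb T}\phi_0(x)\phi_0(x+\tau)\,dx$ is the autocorrelation of $\phi_0$. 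Writing $\theta=2\pi m/N$, \eqref{eigenvalues} then reads $\Lambda_m=g_1+g_2e^{-i\theta}+g_3e^{-2i\theta}+g_4e^{i\theta}$, and (equivalently by Theorem~\ref{thm:circ}) $M(t)$ is singular iff some $\Lambda_m$ vanishes.

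The key input is that $\phi_0$ is twice a hat function, so $g$ is a multiple of the cubic B-spline: normalising $\Delta x=1$, a direct computation gives $g(\tau)=\tfrac83-4\tau^2+2|\tau|^3$ for $|\tau|\le1$ and $g(\tau)=\tfrac23(2-|\tau|)^3$ for $1\le|\tau|\le2$. The decisive simplifications are $g_3=\tfrac23 s^3$ and $g_4=\tfrac23(1-s)^3$. I would then split $\Lambda_m=0$ into $\mathrm{Im}\,\Lambda_m=\sin\theta\big[(g_4-g_2)-2g_3\cos\theta\big]$ and $\mathrm{Re}\,\Lambda_m=g_1+(g_2+g_4)\cos\theta+g_3\cos2\theta$, and analyse the two regimes $\sin\theta=0$ and $\sin\theta\ne0$ separately.

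For $\sin\theta=0$ the mode $\theta=0$ gives $\Lambda_0=g_1+g_2+g_3+g_4>0$, while $\theta=\pi$ (admissible only for $N$ even, $m=N/2$) gives, after factoring, $\mathrm{Re}\,\Lambda_{N/2}=\tfrac43(1-2s)(1+2s-2s^2)$; since $1+2s-2s^2\ge1$ on $[0,1]$ this vanishes exactly at $s=\tfrac12$, i.e. at $t^\ast$. For $\sin\theta\ne0$ the imaginary equation forces $\cos\theta=\tfrac{g_4-g_2}{2g_3}$, and inserting this into $\mathrm{Re}\,\Lambda_m$ reduces the real equation to $g_3(g_1-g_3)=g_4(g_2-g_4)$, i.e. $\Phi(s)=\Phi(1-s)$ with $\Phi(s):=s^3\big(\tfrac83-4s^2+\tfrac43 s^3\big)$. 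The crux is that $\Phi(s)-\Phi(1-s)$ is odd about $s=\tfrac12$ and, because its top-degree (quintic) term cancels, factors as $2(s-\tfrac12)\big(1-4(s-\tfrac12)^2\big)$, whose only roots in $[0,1]$ are $s\in\{0,\tfrac12,1\}$. At each of these the companion constraint $\cos\theta=\tfrac{g_4-g_2}{2g_3}$ fails ($s=0$ has $g_3=0$ and is handled directly; the ratio equals $-11$ at $s=\tfrac12$ and $-2$ at $s=1$, both outside $[-1,1]$), so no eigenvalue with $\sin\theta\ne0$ vanishes on $[0,1]$.

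Combining the regimes, for $t\in[0,\tfrac1{\lambda N}]$ with $t\ne t^\ast$ every $\Lambda_m\ne0$ and $M(t)$ is non-singular; at $t^\ast$ only $m=N/2$ gives $\Lambda_m=0$, and since circulant matrices are normal the kernel is the span of the single Fourier vector $(1,-1,1,\dots,-1)$ and hence one-dimensional. The step I expect to be the real obstacle is the $\sin\theta\ne0$ case: the whole argument hinges on the algebraic cancellation that kills the quintic term in $\Phi(s)-\Phi(1-s)$ together with the out-of-range cosine values, and this is precisely where the explicit cubic-spline form of $g$ is indispensable. I would also flag that the statement tacitly assumes $N$ even (and $N\ge4$): for odd $N$ the frequency $\theta=\pi$ is not admissible and $M(t^\ast)$ is in fact non-singular, so the hypothesis $N\in2\mathbb N$ should be added.
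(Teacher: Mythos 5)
Your proof is correct, and it rests on the same foundation as the paper's: both reduce singularity of the circulant matrix $M(t)$ to whether an $N$-th root of unity annihilates the four-term symbol built from $c_1,c_2,c_3,c_N$ (your eigenvalue condition $\Lambda_m=0$ with $\theta=2\pi m/N$ is exactly the root-of-unity criterion of Theorem~\ref{thm:circ}, up to a unimodular factor), and both split into the cases $\omega=1$, $\omega=-1$, and $\omega$ complex. The execution of the two nontrivial cases differs, and this is where your version adds value. For $\omega=-1$ the paper argues by a sign change at the endpoints plus monotonicity of $c_N-c_1+c_2-c_3$, with the computations declared ``straightforward''; you instead factor this quantity explicitly as $\tfrac43(1-2s)\bigl(1+2s-2s^2\bigr)$, which exhibits the unique zero at $s=\tfrac12$ immediately. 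For complex $\omega$ the paper factors $p(x,t)=(x-\omega)(x-\bar\omega)(\alpha+\beta x)$, compares coefficients to obtain $\Re\omega=\frac{c_3-c_1}{2c_N}=\frac{c_N-c_2}{2c_3}$, and then \emph{asserts} that this fraction is $\le -1$; your imaginary-part constraint $\cos\theta=\frac{g_4-g_2}{2g_3}$ is precisely the paper's second fraction, but rather than proving the $\le -1$ bound you substitute it into the real part, reduce to $\Phi(s)=\Phi(1-s)$, exploit the cancellation of the quintic term to factor the difference as $2(s-\tfrac12)\bigl(1-4(s-\tfrac12)^2\bigr)$, and then eliminate the three roots $s\in\{0,\tfrac12,1\}$ by checking that the required cosine value lies outside $[-1,1]$. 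The payoff is a fully self-contained argument: the two inequalities the paper leaves unproven become explicit polynomial identities, made possible by your use of the cubic-B-spline form of the autocorrelation of \eqref{phihut}. Finally, two of your additions repair genuine omissions in the paper: the kernel at $t^*$ being spanned by the single Fourier vector $(1,-1,\dots,-1)$ is stated in the theorem but never argued in the paper's proof, and the tacit hypothesis that $N$ be even (and $N\ge 4$) is indeed necessary --- for odd $N$ the frequency $\theta=\pi$ does not occur, so $M(t^*)$ is invertible and the nullspace claim is vacuous; the paper nowhere records this.
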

 \begin{proof}
 In order to proof that the matrix is nonsingular we use Theorem \ref{thm:circ}. Our matrix is a circulant matrix composed of the vector $c=[c_1,...,c_N]$, where $c_j= \int\limits_{ \mathbb{T}} \phi_1(x) \phi_j(x+2\lambda t) dx$.
 In the given interval we have  $c_j=0$  except for  $c_1,c_2,c_3,c_N$. It is well known that the circulant matrix composed by $c_1,...c_N$ has up to sign the same determinant  as $c_N,c_1,...c_{N-1}$. Therefore we can consider  this matrix instead. Hence, the polynomial we are interested in is given by
 
 \[c_N+c_1 x+c_2 x^2+c_3 x^3\]. 
 Next, we  show that no root of unity is a zero of that polynomial except at time $t=t^*.$ 
 \end{proof}
 \begin{lemma}
 For $c_j(t)= \int\limits_{ \mathbb{T}} \phi_1(x) \phi_j(x+2\lambda t) dx$ the polynomial  $p(x,t)=c_N(t)+c_1(t) x+c_2(t) x^2+c_3(t) x^3$ has only a root of unity if $t=t^*.$
 \end{lemma}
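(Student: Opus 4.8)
The plan is to translate the singularity question into the statement of the lemma and then to settle it by an explicit but short case analysis. By Theorem~\ref{thm:circ} together with the eigenvalue formula \eqref{eigenvalues}, $M(t)$ is singular exactly when $p(\cdot,t)$ vanishes at some $N$-th root of unity, so it suffices to show this occurs only at the midpoint $s=\tfrac12$ of the fundamental interval, i.e. at $t=t^{*}$. I write $s:=2\lambda t/\Delta x\in[0,1]$, so that $t^{*}=\tfrac{1}{2\lambda N}$ corresponds to $s=\tfrac12$. Rescaling $x\mapsto x/\Delta x$ turns $\phi_0$ from \eqref{phihut} into the unit hat and multiplies every coefficient by the common factor $(\Delta x)^3$, which is irrelevant for the zeros of $p$; I therefore compute with $\Delta x=1$. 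All four surviving coefficients are then values of the hat autocorrelation $g(\tau)=\int\phi_0(x)\phi_0(x+\tau)\,dx$, namely $c_1=g(s)$, $c_2=g(s-1)$, $c_3=g(s-2)$, and $c_N=g(s+1)$, the last one coming from the periodic wrap-around on $\mathbb{T}$. Since $g$ is (a shift of) a cubic B-spline, a direct integration yields
\begin{align}
c_1=\tfrac{8}{3}-4s^2+2s^3,\quad c_2=\tfrac{8}{3}-4(1-s)^2+2(1-s)^3,\quad c_3=\tfrac{2}{3}s^3,\quad c_N=\tfrac{2}{3}(1-s)^3,
\end{align}
which already display the reflection symmetry $s\mapsto 1-s$ interchanging $(c_1,c_3)\leftrightarrow(c_2,c_N)$.

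Next, let $\zeta$ be an $N$-th root of unity with $p(\zeta)=0$, and distinguish the real from the non-real case. If $\zeta=1$, then $p(1)=c_1+c_2+c_3+c_N=\int_{\mathbb{T}}\phi_1(x)\sum_j\phi_j(x+s)\,dx$, which is a strictly positive constant because the shifted hats sum to a constant; hence $\zeta=1$ is never a root. If $\zeta=-1$, which is an $N$-th root of unity exactly when $N$ is even (as in the theorem), a short computation gives
\begin{align}
p(-1)=-\tfrac{4}{3}\,(2s-1)(2s^2-2s-1),
\end{align}
whose only zero in $[0,1]$ is $s=\tfrac12$, i.e. $t=t^{*}$.

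The remaining, and main, case is a non-real unit-modulus root. Consider first $s\in(0,1)$, where $c_3>0$ and $p$ is a genuine cubic. Since $p$ has real coefficients, a non-real root $\zeta=e^{i\theta}$ on the unit circle forces $\bar\zeta$ to be a root and the third root $r$ to be real; matching Vieta's formulas gives $r=-c_N/c_3$ together with the compatibility relation
\begin{align}
c_3^2-c_1c_3=c_N^2-c_2c_N .
\end{align}
Substituting the explicit coefficients and writing $s=\tfrac12+\rho$, the difference of the two sides collapses (the degree-five terms cancel) to $\tfrac{4}{3}\rho(4\rho^2-1)$, so on $(0,1)$ it vanishes only at $s=\tfrac12$. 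There one has $c_1=c_2$ and $c_3=c_N$, and the Vieta relation $2\cos\theta=(c_3-c_1)/c_N$ would force $\cos\theta=-11$, which is impossible; hence no non-real unit root occurs for $s\in(0,1)$. Finally, at the endpoints $p$ factors as $\tfrac23(x^2+4x+1)$ at $s=0$ and as $\tfrac23 x(x^2+4x+1)$ at $s=1$, with roots $0,\,-2\pm\sqrt3$, none of which lie on the unit circle. Collecting the three cases, an $N$-th root of unity is a zero of $p(\cdot,t)$ only at $t=t^{*}$, which proves the lemma.

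I expect the main obstacle to be precisely the non-real case: the decisive simplification is the reduction, through Vieta, to the single compatibility polynomial whose factorization pins the candidates down to $s\in\{0,\tfrac12,1\}$, after which one must still exclude a genuine unit root at $s=\tfrac12$ using the bound $|\cos\theta|>1$. The other places demanding care are the bookkeeping of the periodic coefficient $c_N$ and the one-step circulant reindexing behind \eqref{eigenvalues}, which is what makes $p=c_N+c_1x+c_2x^2+c_3x^3$ (rather than $c_1+c_2x+\cdots$) the correct polynomial whose root-of-unity zeros detect the singularity of $M(t)$.
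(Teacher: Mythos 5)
Your proof is correct. I verified the rescaled coefficients $c_1=\tfrac83-4s^2+2s^3$, $c_2=\tfrac83-4(1-s)^2+2(1-s)^3$, $c_3=\tfrac23 s^3$, $c_N=\tfrac23(1-s)^3$, the factorization $p(-1)=-\tfrac43(2s-1)(2s^2-2s-1)$, the collapse of the compatibility relation $c_3^2-c_1c_3=c_N^2-c_2c_N$ to $\tfrac83 s(1-s)(1-2s)=\tfrac43\rho(4\rho^2-1)$, and the value $(c_3-c_1)/c_N=-22$ at $s=\tfrac12$; all are right. Your argument shares the paper's skeleton (the case split $\omega=1$, $\omega=-1$, $\omega$ non-real, and the conjugate-pair/Vieta identification of $\Re(\omega)$ through the coefficients), but each case is closed by a genuinely different mechanism. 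For $\omega=-1$ the paper argues by sign change plus a monotonicity claim ("the derivative is positive"), both asserted without computation, whereas you factor the cubic in $s$ explicitly and read off that $s=\tfrac12$ is the only admissible zero. For non-real $\omega$ the paper asserts, again without proof, that the common value $\Re(\omega)=\frac{c_3-c_1}{2c_N}=\frac{c_N-c_2}{2c_3}$ is always at most $-1$; you instead use the \emph{equality} of the two expressions as a constraint on time, which localizes everything to $s=\tfrac12$, and then refute a non-real unit root there by the impossible value $\cos\theta=-11$. Your route additionally handles the endpoints $s\in\{0,1\}$, where $c_3$ or $c_N$ vanishes and both the paper's fractions and the generic Vieta step are undefined — a degeneracy the paper silently skips. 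What each buys: the paper's uniform-bound strategy is shorter and, once actually proved (with your explicit coefficients one gets $\frac{c_3-c_1}{2c_N}=1-\frac{3}{(1-s)^2}\le-2$ on $[0,1)$, so it does hold), it excludes non-real roots at every time in one stroke; your version is longer but fully self-contained, exploits the symmetry $s\mapsto 1-s$ to keep the algebra transparent, and turns the paper's two unproved assertions into checkable polynomial identities.
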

 \begin{proof}
 Assume that $\omega$ is a root of unity and also a root of $p(x,t)$. Then $\omega$ is either complex, equal to $1$ or $-1$. However, $\omega=1$ can not be a root of $p$ as all $c_j$ are positive. If $\omega=-1$ is a root we have that $c_N(t)-c_1(t) +c_2(t)-c_3(t)=0$. It is straightforward by the definition of $c_j$ to show that $c_N(0)-c_1(0) +c_2(0)-c_3(0)<0$ and $c_N(\frac{1}{\lambda N})-c_1(\frac{1}{\lambda N}) +c_2(\frac{1}{\lambda N})-c_3(\frac{1}{\lambda N})>0$. Further, the derivative is positive in the given interval and therefore it has exactly one zero in this interval. This is at $t=t^*$. If $\omega$ is complex than also $\bar{\omega}$ has to be a root of $p(x,t)$ and then we obtain  

\[p(x,t)=(x-\omega)(x-\bar{\omega})(\alpha+\beta x)\]
 for some $\beta$ and $\alpha$. Comparing the coefficients we get that
 
 \begin{align}
 c_1&=\beta-2\alpha \Re(\omega)\\
c_2&=\alpha-2\beta \Re(\omega) \\
 c_3&=\beta\\
 c_N&=\alpha 
 \end{align}
 and from that we get that 
 \[\Re(\omega)=\frac{c_3-c_1}{2c_N}=\frac{c_N-c_2}{2c_3}.\]This fraction is always less or equal to $-1$ and therefore $\omega$ can only be $-1$ which has been treated before. 
 
\end{proof}
\subsubsection{Differential algebraic nature of the system \eqref{reform} }

As discussed in the previous section $M(t)$ could be singular for base functions fulfilling \eqref{intersec}. For the choices discussed above $M(t)$ is singular only
at a single point in time $t^*$ within the interval $\left[ 0, \frac{\Delta x}{2\lambda} \right]$, i.e. for the last example $t^* = \frac{\Delta x}{4\lambda}.$ 
 Furthermore,  it exist a vector $e$ such that $M(t^*)e=0$ and for all vectors $v$ orthogonal to that we have $M(t^*)v\neq 0$ unless $v=0$

 Let $V,W$ be the $N\times (N-1)$ dimensional orthogonal matrices and $f$ the vector orthogonal to $W$such that $W^TM(t)V$ is invertible and $f^TM(t)V=0$. Then,  decompose $\alpha^-$ into 
 \begin{equation}
 \alpha^-(t) =\alpha_0^-(t) e+V\bar{\alpha}^-(t). 
 \end{equation}
\renewcommand{\b}{ {\vec{\beta} }}
For $\b=( \alpha^+, \bar{\alpha}^-, \alpha^-_0 )$  problem \eqref{reform} reads 
\begin{align}\label{DAE2}
\begin{bmatrix} M(0) &-M(t)V &-M(t)e \\
W^TM(0) &W^TM(t)V & W^TM(t)e\\
f^TM(0) & 0 & f^TM(t)e\end{bmatrix}\frac{d}{dt}\b(t) =
\frac{1}\epsilon \begin{pmatrix} 0 \\  [M(0), M(t)] \b(t) -2 \tilde{F}(t,\b(t))  \end{pmatrix}. 
\end{align}
This system is not an ordinary differential equation at $t=t^*$, since  $M(t^*)e=0$. The resulting system is a semi--explicit differential algebraic equation. We introduce a small parameter $\rho>0$ and regularize equation \eqref{DAE2} by 

%

\begin{align}\label{DAE3}
\begin{bmatrix} M(0) &-M(t)V &-M(t)e\\
W^TM(0) &W^TM(t)V & W^TM(t)e\\
f^TM(0) & 0 & f^TM(t)e+\rho\end{bmatrix}\frac{d}{dt}\b =
\frac{1}\epsilon \begin{pmatrix} 0 \\  [M(0), M(t)] \b(t) -2 \tilde{F}(t,\b(t))  \end{pmatrix}. 
\end{align}

or in terms of $\a$ we have 

\begin{align}\label{DAE}
\begin{bmatrix} M(0) &-M(t)\\
M(0) & M(t)+\rho  fe^T\end{bmatrix}\frac{d}{dt}\a = 
\frac{1}\epsilon \begin{pmatrix} 0 \\  [M(0), M(t)] \a(t) -2 \tilde{F}(t,\a(t))  \end{pmatrix}. 
\end{align}

Fpr $\rho>0$ the matrix is invertible and it inverse is given by 
\begin{equation}
\label{eq:Einv}
\begin{bmatrix}M(0)^{-1}(I+M(t)(2M(t)+\rho  f e^T )^{-1}) & M(0)^{-1}M(t)(2M(t)+\rho  f e^T)^{-1}\\
-(2M(t)+\rho  f e^T)^{-1}&  (2M(t)+\rho  f e^T)^{-1} \end{bmatrix}.
\end{equation}

%
%
%
\subsection{Temporal Discretization  and Model Order Reduction }

Fix a positive parameter $\rho>0$ and consider  system \eqref{DAE} subject to initial conditions \eqref{ic red}. 
Consider a temporal grid $t^n=\Delta t \; n$ for $n=0,\dots,$ where for simplicity we consider an equi-distant grid in time.  Denote by $\alpha^\pm_n = \alpha^\pm(t^n).$ Furthermore, denote by $$N(t):=M(t) + \rho f e^T.$$  We rewrite \eqref{DAE} as 
\begin{align}
	\frac{d}{dt} (M(0)\alpha^+-M(t)\alpha^-)&=-\dot{M}(t)\alpha^-,\\
	\frac{d}{dt} (M(0)\alpha^++N(t)\alpha^-)&=\dot{M}(t)\alpha^-- \frac{2}{\epsilon} \left(
	\frac12 (M(0)\alpha^++M(t) \alpha^-) - \tilde{F}(t,\alpha^\pm(t)) 
	\right), \label{temp2}
\end{align}
where $\tilde{F}$ is given by equation \eqref{tildeF} and $\dot{M}_{i,j}(t)=\int_{\mathbb{T}} \phi_j(x) \phi_k'(x+2\lambda  t ) 2 \lambda dx.$ An implicit discretization of \eqref{temp2} is preferable to resolve small scales of $\epsilon.$ Since the term $\tilde{F}$ is an integral term in both $\alpha^+$ and $\alpha^-$  a fully implicit discretiaztion is computationally too costly. We therefore proceed using a semi-implicit discretization, i.e., 

\begin{align}
	(M_0\alpha^+_{n+1}-M_{n+1}\alpha^-_{n+1})-(M_0\alpha^+_{n}-M_{n}\alpha^-_{n})=&-\Delta t \dot{M}_n\alpha^-_n\\
	(M_0\alpha^+_{n+1}+N_{n+1}\alpha^-_{n+1})-(M_0\alpha^+_{n}+N_{n}\alpha^-_{n})=&\Delta t \dot{M}_n\alpha^-_n -\frac{ \Delta t  }{\epsilon} \left(
	 M_0\alpha^+_{n+1}+M_{n+1} \alpha^-_{n+1} - \tilde{F}_n
	\right), \\ 
	\tilde{F}_n =& 2 \tilde{F}(t^n,\alpha^\pm_n),
\end{align}
leading to the following system
\begin{align}\label{temp3}
	\begin{bmatrix} M_0 &-M_{n+1}\\ M_0 & N_{n+1}\end{bmatrix}
	\begin{bmatrix}\alpha^+_{n+1}\\ \alpha^-_{n+1}\end{bmatrix}=
	\begin{bmatrix}
		M_0\alpha^+_n-M_n\alpha_n^--\Delta t \dot{M}_n\alpha^-_n\\
		\frac{\epsilon}{\epsilon+\Delta t} \left(M_0\alpha^+_n+N_n\alpha_n^-+\Delta t \dot{M}_n\alpha^-_n \right) +  \frac{\Delta  t}{\epsilon+\Delta t } \tilde{F}_n
	\end{bmatrix}
\end{align}
As in equation \eqref{reform} the left-hand side of equation \eqref{temp3} consists of a $2\times 2$ block matrix 
$$R_{n+1}:=\begin{bmatrix} M_0 &-M_{n+1}\\ M_0 &N_{n+1}\end{bmatrix}$$ which is invertible provided that $M(0)$ is invertible and 
 $\rho$ is non--negative. In this case its inverse is given by equation \eqref{eq:Einv} evaluated at $t=t^n.$  
  Furthermore, $\tilde{F}_n$ and $\dot{M}_n=\dot{M}(t^n)$ needs to be discretized using a numerical quadrature formula of sufficient high--order.  
  Note that  the previous formulation can be formally evaluated for all values of $\epsilon$ (even $\epsilon=0$). However, since $\tilde{F}_n=\tilde{F}(t^n, \alpha^+_n, \alpha^-_n)$ the previous scheme requires a time step restriction of the type 
\begin{align}\label{cfl}
	\Delta t \leq C \epsilon 
\end{align}
for some constant $C$ to be stable. Clearly, this leads to small steps for sufficiently small $\epsilon.$ The only way to circumvent this restriction is to discretize $\tilde{F}$ implicit.  Since our focus is on the model order reduction for the system \eqref{temp3} we leave the efficient computation of the fully implicit scheme for future investigation. For sake of completeness we also state the alternative fully explicit discretization as 

\begin{align}
(M_0\alpha^+_{n+1}-M_{n+1}\alpha^-_{n+1})-(M_0\alpha^+_{n}-M_{n}\alpha^-_{n})=&-\Delta t \dot{M}_n\alpha^-_n\\
(M_0\alpha^+_{n+1}+N_{n+1}\alpha^-_{n+1})-(M_0\alpha^+_{n}+N_{n}\alpha^-_{n})=&\Delta t \dot{M}_n\alpha^-_n - \frac{2 \Delta t}{\epsilon}(
\frac12 (M_0\alpha^+_{n}+M_{n} \alpha^-_{n}) - \tilde{F}_n
) \label{explicit disc} \\
\tilde{F}_n =& \tilde{F}(t^n,\alpha^\pm_n).
\end{align}
The same restriction \eqref{cfl} applies for this discretization. 
\par 
Note that the original scheme  \cite{Jin1995b} does {\bf not } require a time step restriction of the order of $\epsilon.$ The reason being that in the case of a finite volume scheme the implicit discretization of the source term can be evaluated analytically. As outlined above, the basis functions $\phi$ however couple the coefficients and this coupling prevents an analytically evaluation.

\subsection{Projection based Model Order Reduction for system \eqref{eq:ode1}-\eqref{eq:ode2}}

The previous formulation \eqref{temp3} is amendable for model order reduction. Hence, we approximate 
 $\alpha^{\pm}(t)\in\mathbb{R}^N$ within a lower dimensional linear subspace of $\mathbb{R}^N$,
meaning there exist $V_+$ and $V_-$ such that $\alpha_{\pm}(t)\approx V_{\pm}V_{\pm}^T\alpha^{\pm}(t)$ and therefore
 an $\hat{\alpha}^{\pm} (t) $ exist such that  $\alpha^{\pm}(t)\approx V_{\pm}\hat{\alpha}^{\pm}(t)$.

 Using this approximation in the ODE we get the following 
\begin{align}
M(0)V_+\dot{\hat{\alpha}}^+(t)  - M(t)V_-\dot{\hat{\alpha}}^-(t) &=0\label{eq:aode1}\\
M(0)V_+\dot{\hat{\alpha}}^+(t) +M(t) V_-\dot{\hat{\alpha}}^-(t) &= - \frac{2}{\epsilon} \left( 
\frac12 \left(M(0)V_+\hat{\alpha}^+(t) +M(t) V_-\hat{\alpha}^-(t) \right) - \tilde{F}(t,V_{\pm} \hat{\alpha}^\pm(t) )
\right) \label{eq:aode2} , 
\end{align}
which is then projected to get a system of ordinary differential equation in a lower dimension. We use a Galerkin projection for simplicity. However we first solve for $V_{\pm}\hat{\alpha}^\pm$  and then multiply the equation by the transpose of the projections matrices $V_\pm$: 

\begin{align}
\dot{\hat{\alpha}}^+(t) &=-\frac{1}{2}V_+^TM^{-1}(0) \frac{2}{\epsilon} \left( 
\frac12 \left(M(0)V_+\hat{\alpha}^+(t) +M(t) V_-\hat{\alpha}^-(t) \right) - \tilde{F}(t,V_{\pm} \hat{\alpha}^\pm(t) )
\right)\label{eq:rode1}\\
\dot{\hat{\alpha}}^-(t) &= - V_-^T M^{-1}(t)\frac{1}{\epsilon} \left( 
\frac12 \left(M(0)V_+\hat{\alpha}^+(t) +M(t) V_-\hat{\alpha}^-(t) \right) - \tilde{F}(t,V_{\pm} \hat{\alpha}^\pm(t) )
\right) \label{eq:rode2} , 
\end{align}
As above if $M(t)$ is not invertible we replace it by $N(t)$.

In order to gain computation speed solving equation \eqref{eq:rode1} and \eqref{eq:rode2} over the full system \eqref{DAE} we need to make sure that the right hand sides are evaluated fast and do not need the computation of vectors of the full size. This is for arbitrary nonlinear flux function and arbitrary basis functions $\phi$ not a trivial problem. However this paper is concerned with the proof of concept of the general method, namely the fact that the solution of $\alpha$ in $\mathbb{R}^{2N}$ lives in a low-dimensional space and this fact can be exploited to create a reduced model with standard methods for $\alpha$.

\section{Computational Results}

The theoretical findings are exemplified on a series of linear and nonlinear numerical examples. All computational results are obtained on  torus  $\mathbb{T}=[-1,1].$ The matrix $M(t)$ defined by equation \eqref{mass matrix}  and the $j$ the component of the right-hand side  $\tilde{F}$ are given by  
 \begin{align}
M_{j,k}(t) = \int\limits_{-1+\lambda t }^{1+\lambda t } \phi_j(x-\lambda t) \phi_k(x+ \lambda t) dx, \; &
\tilde{F}_j(t,\a) = \int\limits_{-1+\lambda t }^{1+\lambda t }    \phi_j(x-\lambda t ) f(   
\tilde{u}(t,x)   )dx, 
\end{align}
where $\tilde{u}$ is given by equation \eqref{tildeu}. As base function we choose  compactly supported, piecewise linear  functions fulfilling  property  \eqref{intersec}. We divide the torus in cells $[j-1,j]\Delta x$ where  for fixed $N$ we set $\Delta x=\frac{2}{N}$ and $j=1,\dots N$. Then, the set of  base functions $\{ \phi_j : j=1,\dots, N\}$ are defined by 
\begin{align}
\phi_j(x)=\left \{\begin{matrix}
x-(j-1)\Delta x& x \in[j-1,j]\Delta x\\
(j+1)\Delta x-x& x\in[j-1,j]\Delta x \\
 0 & \mbox{else} 
\end{matrix}\right. 
\end{align}
As shown above the matrix  $M(0)$ is invertible for the previous choice of $\phi_j.$  
Further, $M(t)$ is invertible for $t>0$, except at the discrete time $t_\ell=\frac{2\ell+1}{2\lambda N}$. The number of  time steps is denoted by $N_{\Delta t}.$

\medskip 
 
The further parameters are set as follows: 
\begin{align}
	\rho=\epsilon, ; \Delta t= \frac12 \epsilon \mbox{ and } \; T=N_{\Delta t} \Delta t.
\end{align}

Since the basis function is only nonzero on a small interval we use this in the numerical implementation with MATLAB\,\textsuperscript{\tiny\textregistered} built in function \texttt{integral}.
When we compute the matrix $M$ at any  time $t=t_n$ where we use the fact that our basis functions $\phi$ are  shifted. This implies that we  have to compute only a single  row of the matrix as the matrix is a circulant matrix. To be even more precise, as only four of the values are potentially nonzero, we only have to compute those. Besides $M$ and $N$ we have to also compute $\tilde{F}$ at time $t_n$. As $\alpha_n^\pm$ are given we can define the function $\tilde{u}(t_n,x)$ and $\tilde{F}$ compute via a quadrature rule which we do by using the build in MATLAB\,\textsuperscript{\tiny\textregistered} function \texttt{integral}.  Once we have the initial values for $\alpha^\pm$ and the possibility to evaluate $M$ and $\tilde{F}$ we use \eqref{temp3} to compute further timesteps of $\alpha^\pm$. Since we are only interested  in the qualitative behaviour,   we do not discuss the possibilities for improving this numerical computation.
\medskip
In the numerical results we will first show that the approach using translated base functions yields qualitative and quantitative correct solutions in the case of linear transport with and without nonlinear source terms, showing that this discretization produces feasible solution. We then show that a linear subspace in the solution space of $\alpha^\pm$  produces correct results and with that the reducability of the ordinary differential equation in $\alpha$.  Secondly, we show that also for nonlinear transport the proposed method yields a good qualitative and quantitative agreement with standard results by  finite--volume methods. The later however are not amendable for model order reduction. In the case of strong shocks the reduction in  dimension of the reduced model order system is however not as significant as in the linear case. However, the computed reduced order system is able to correctly reproduce solutions to different initial data. This example shows that the chosen formulation is  amendable for model order reduction even in the nonlinear case and in the case of discontinuous solutions.

\subsection{Linear Transport with Nonlinear Source}

In order to validate the Ansatz \eqref{Ansatz} we present  numerical results for  linear transport equation with nonlinear source term: 
\begin{align}
	\label{simple}
	\partial_t w(t,x) +\lambda  \partial_x  w(t,x) =\gamma w^2(t,x)+\delta w \\ 
	w(0,x)=w_0(x) \not = 0 \label{simple_id}
\end{align}
The equation \eqref{simple} contains three parameters $\lambda \not =0,$ and $\gamma, \delta \in \R.$
The case $\gamma=\delta=0$ corresponds to a linear transport equation.  On the full space $x \in \R$ the explicit solution to 
equation \eqref{simple} and \eqref{simple_id} is given by 
\begin{equation} \label{simple expl} 
	w(t,x)=\frac{1}{\frac{e^{-\delta t}}{w_0(x-\lambda t)}-\frac{\gamma}{\delta}(1-e^{-\delta  t})}.
\end{equation}
for $t$ sufficiently small such that \eqref{simple expl} is well--defined. Due to the finite speed of propagation a numerical 
 comparison of approximation errors with the exact solution is  possible  provided that $supp w_0(x) \subset\subset \mathbb{T}$. In this
 case the exact solution $w(t,x)$ is given  by equation \eqref{simple expl} for  $x \in  {\bf S}(t)$ where ${\bf S}(t):= \{x:  x- \lambda t \in supp w_0 \}$ and $w=0$ zero else. The exact solution $w(t,x)$ is defined for any $t$ such that ${\bf S}(t) \subset \mathbb{T}.$  In the case of the linear equation our Ansatz reduces to 

\begin{equation}\label{eq:walpha}w(t,x)=\sum_{i=1}^{N} \alpha_i(t)\phi_i(x-\lambda t) 
\end{equation}
\begin{figure}[htb]
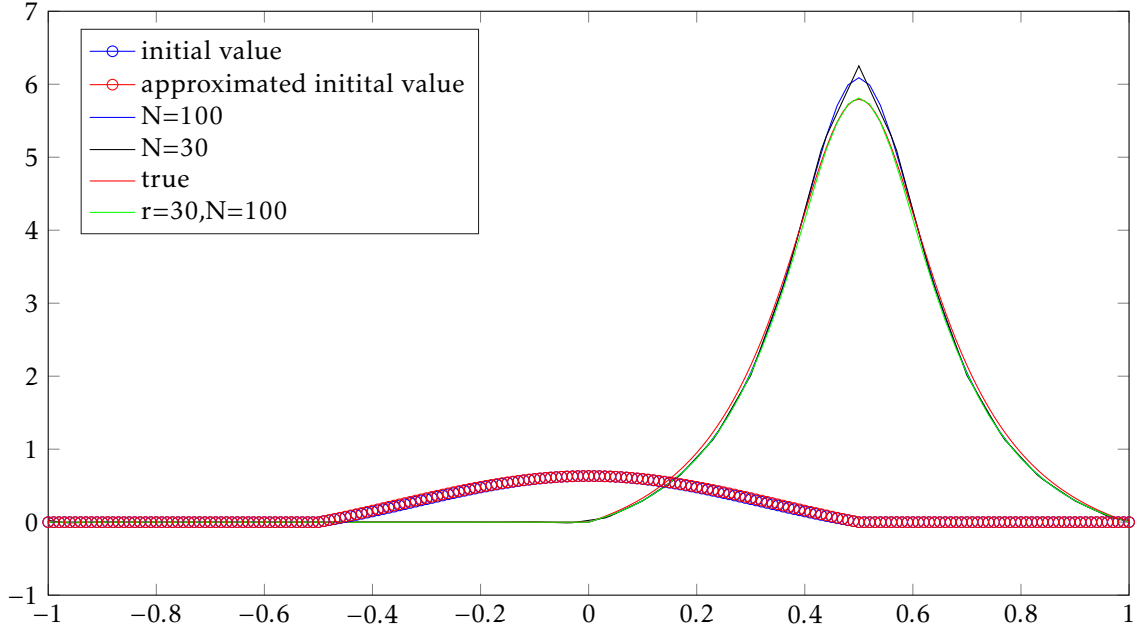

	\include{test00}
\caption{\label{fig:test0}
Simulation result for a linear transport equation with a nonlinear right hand side with a solution ansatz as in equation \eqref{eq:walpha} for two different values of $N$ and also a simulated reduced system arising from the larger system and a reduced order equivalent to the size of the smaller one.}
\end{figure}
 In the numerical test shown in Figure \ref{fig:test0} we choose  
$\gamma=2,\delta =1$ and simulate until $T=0.5$. We compute the solution for $N=100$ and a reduced model  projected on a $r=30$ dimensional subspace. We compare the analytical solution, the solution on the subspace with $N=100$ and the solution with $N=30$ modes. The initial value is given by $w_0(x)=e^{-4x^2}-e^{-1}$ on $(-1/2,1/2)$ and zero otherwise. Its approximation on the subspace is also shown. As seen in Figure \ref{fig:test0}  we observe very good agreement between the reduced basis approximation and the analytical solution. Clearly, higher-dimensional subspaces provide better agreement than lower dimensional ones. This example indicates that the use of translate base functions leads to qualitative and quantitative correct results in the linear case. 

\subsection{Relaxation Approach for a Linear Problems}

Consider the same linear flux as in the previous example. Here, we apply the relaxation formulation with $\lambda=a$ and $\epsilon=10^{-3}$ to the linear problem. Clearly, this is not necessary in order to solve the linear problem but the numerical result following illustrates that no additional numerical approximation error appears. 
\medskip 
The initial condition is $$u_0=\sin(\pi x)$$  and the analytical solution is given by $u(t,x)=u_0(x-t)$. In Figure \ref{fig:test_lin} we show  initial condition and analytical solution at  final time $T=1$.   Figure \ref{fig:test_lin} shows that using $N=40$ basis functions  the numerical solution is indistinguishable from the analytical solution. Further, we observe that in this particular example a reduced system of dimension two can already capture the complete behavior due to the fact that there is only linear transport. For sake of completeness we also show the result with only a single base function that is equal to zero. The test case only contains smooth data and solution and as expected the a low dimensional reduced base formulation recovers the behavior well.

\begin{figure}[hbt]
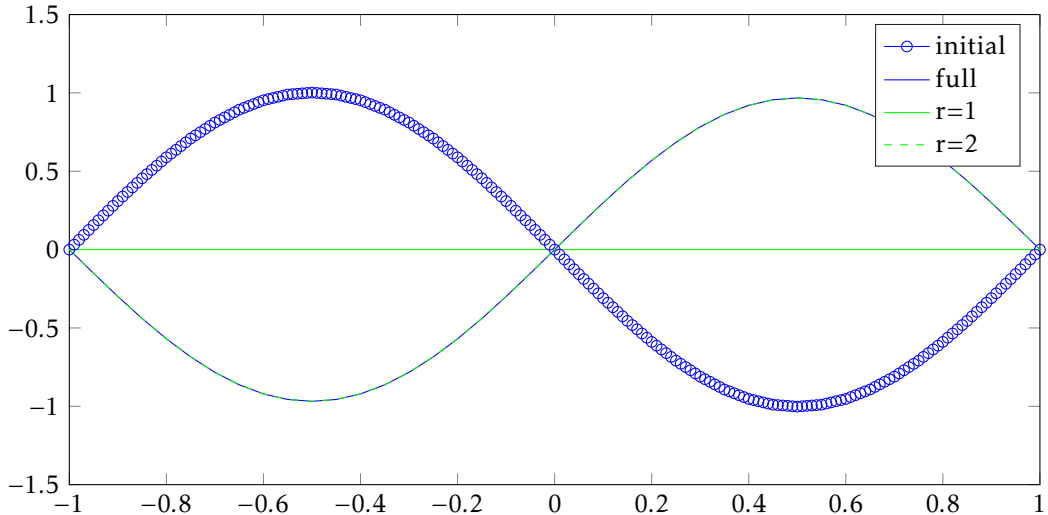

	\include{test_1_plot}

	\caption{  	\label{fig:test_lin}
		Relaxation formulation applied to a linear flux $f(u)=u$ and smooth initial data. Shown are the numerical solution at time $T=1$ with $N=40$ piecewise defined basis function and  $r=2$ and $r=1$ reduced basis functions, respectively. 
}
\end{figure}

\subsection{ Burgers Equation and Approximation of Shock Solution}

We consider the relaxation formulation for Burgers equation, i.e., the flux is given by  $f(u)=\frac12 u^2$. Smooth periodic initial data 
$$u_0(x)=\frac12+\sin(\pi x)$$ 
on $\mathbb{T}$ is considered. It is known that at time $T=1$ a shock is formed due to the nonlinear transport. In Figure \ref{fig:BurgerNstudy} we show the quality of the proposed approximation for different numbers of base functions $N.$ We choose $\lambda$ larger than the norm of the initial data, i.e., 

$$ \lambda = 2$$ and 
$$\epsilon=10^{-3}$$ 
for this test. In the subfigures of Figure  \ref{fig:BurgerNstudy} initial data and the solution at terminal time $T=1$ is shown. We observe that for $N$ sufficiently large the expected shock is recovered in detail. For small $N$ we observe a Gibb's phenomena due to the strong discontinuity of the underlying solution. To compare the solution we also included a figure showing the result of a second--order finite volume scheme applied to the {\bf same} relaxation formulation. In particular, we observe that the size of the jump discontinuity is the same for the proposed approximation and the finite--volume scheme. The later is taken from reference \cite{Jin1995b} and the spatial discretization is given by $\Delta x=1/320.$ Since $\epsilon>0$ we observe in all simulations a slight decay of the maxima and minima over time. For smaller values of $\epsilon$ the decay of the extreme values is expected to be smaller. However, the time step of the proposed method scales with $\epsilon$ and this leads to inefficiencies in the numerical scheme. Compared to the method \cite{Jin1995b} we can not resolve in the regime $\Delta t>\epsilon.$ 

\begin{figure}[htb]
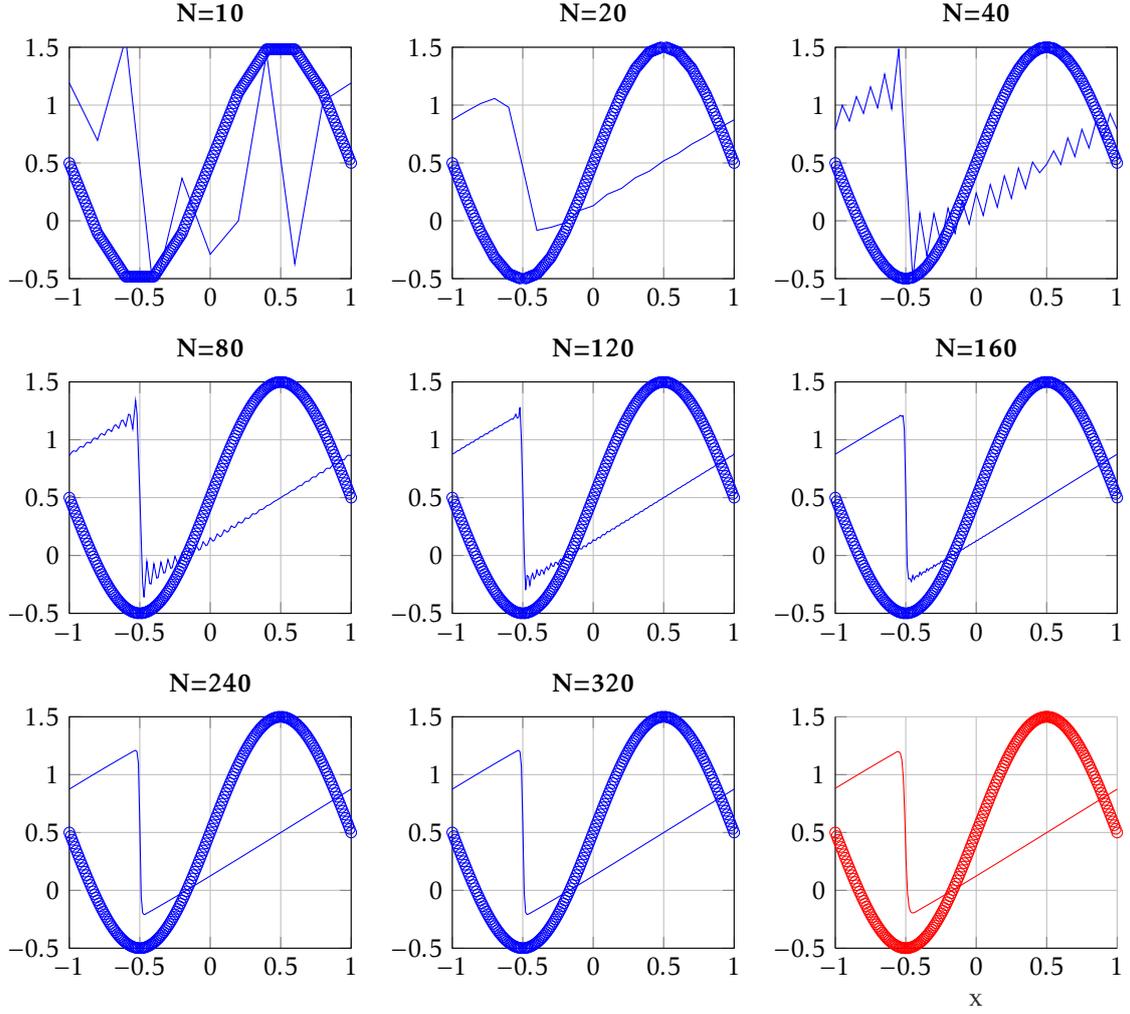

\include{test_2_plot}
\caption{\label{fig:BurgerNstudy}
	Relaxation approximation to a solution to Burgers equation with smooth initial data. At time $T=1$ a shock develops that is captured by the proposed approximation for $N$ sufficiently large. In red a comparison with a first--order finite volume scheme with $N=320$ discretization points in space.  }
\end{figure}

In Figure \ref{fig:sing}(the red  curve) we observe decay of singular values in the solution  such that we can derive efficient model order reduction formulations  in the classical sense. As expected the decay is not as significant as in elliptic or parabolic problems.

\subsection{Model Order Reduction Strong Shock}

As a second example with non--smooth data we consider  Burgers' equations and initial data of the type   
$$u_0(x)= a(   \chi_{0, 1/2 }(x) - 1 ),$$  
for a parameter $a>0.$ The value of $a$ controls the size of the jump discontinuity. The solution $u$ to Burgers equation and the given initial data consists of a shock wave followed by a rarefaction. The later wave is a linear function. The parameter $a$ also controls the speed of propagation of the shock wave due to the Rankine-Hugenoit condition:  the speed is $s=-a\frac14.$ 
For the numerical test we set $\epsilon=10^{-3}$ and  $\Delta t=\frac{\epsilon}{10}$.   In Figure \ref{fig:burgermor}  the initial condition and its approximation  with a discretization of $N=160$ are shown in the left part of the figure. Small oscillations due to the strong discontinuity  are visible. On the right we show the solution for two  reduced model order approximations  as well as the full model and a reference solution. The later is computed as in the previous section using a second--order finite volume scheme with $\Delta x=\frac{1}{160}.$ 

   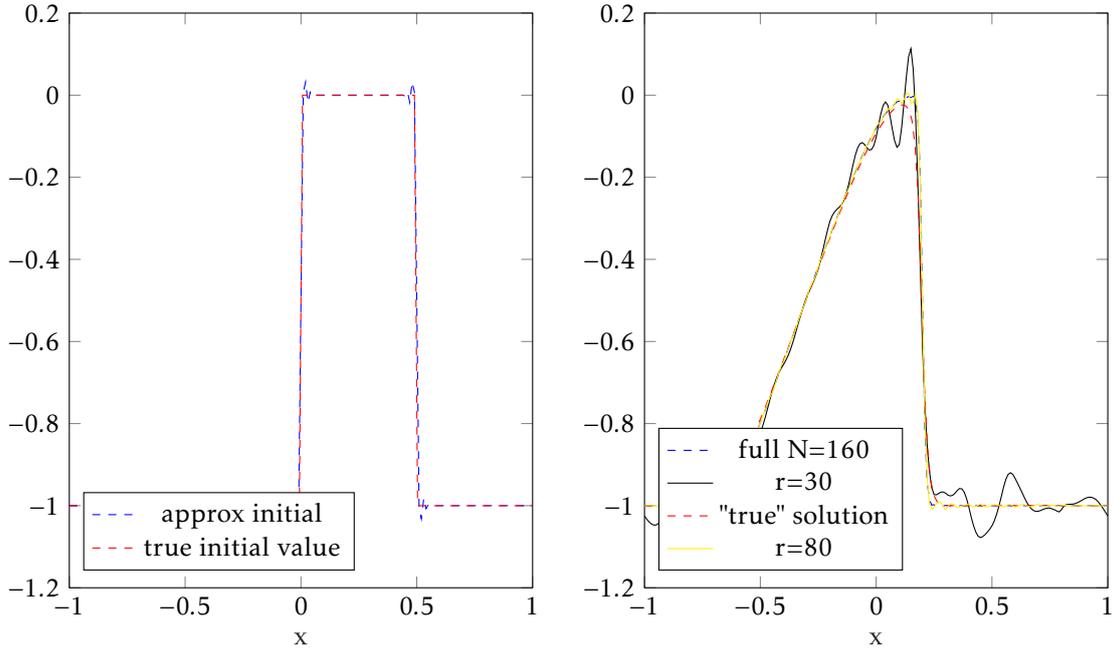
\begin{figure}[hbt]
 \begin{subfigure}{0.5\textwidth}
%
%
\definecolor{mycolor1}{rgb}{1.00000,0.00000,1.00000}%
\definecolor{mycolor2}{rgb}{1.00000,1.00000,0.00000}%
\begin{tikzpicture}

\begin{axis}[%
width=2.4in,
height=3in,
at={(1.504in,0.591in)},
scale only axis,
xmin=-1,
xmax=1,
xlabel style={font=\color{white!15!black}},
xlabel={x},
ymin=-1.2,
ymax=0.2,
axis background/.style={fill=white},
title style={font=\bfseries},
legend pos = south west,
]
\addplot [color=blue, dashed]
  table[row sep=crcr]{%
-1	-0.999999999999994\\
-0.99	-1\\
-0.98	-0.999999999999997\\
-0.97	-1.00000000000001\\
-0.96	-1.00000000000002\\
-0.95	-0.999999999999992\\
-0.94	-0.999999999999999\\
-0.93	-1\\
-0.92	-0.999999999999998\\
-0.91	-1\\
-0.9	-1.00000000000003\\
-0.89	-0.999999999999999\\
-0.88	-0.999999999999997\\
-0.87	-1\\
-0.86	-1\\
-0.85	-0.999999999999993\\
-0.84	-1.00000000000002\\
-0.83	-1.00000000000001\\
-0.82	-0.999999999999996\\
-0.81	-1\\
-0.8	-1\\
-0.79	-0.999999999999993\\
-0.78	-1.00000000000001\\
-0.77	-1.00000000000001\\
-0.76	-0.999999999999993\\
-0.75	-1\\
-0.74	-1\\
-0.73	-0.999999999999996\\
-0.72	-1.00000000000001\\
-0.71	-1.00000000000002\\
-0.7	-0.999999999999991\\
-0.69	-1\\
-0.68	-1\\
-0.67	-0.999999999999998\\
-0.66	-0.999999999999999\\
-0.65	-1.00000000000003\\
-0.64	-0.999999999999999\\
-0.63	-0.999999999999998\\
-0.62	-1\\
-0.61	-1\\
-0.6	-0.999999999999991\\
-0.59	-1.00000000000002\\
-0.58	-1.00000000000001\\
-0.57	-0.999999999999996\\
-0.56	-1\\
-0.55	-1\\
-0.54	-0.999999999999993\\
-0.53	-1.00000000000001\\
-0.52	-1.00000000000001\\
-0.51	-0.999999999999993\\
-0.5	-1\\
-0.49	-1\\
-0.48	-0.999999999999996\\
-0.47	-1.00000000000001\\
-0.46	-1.00000000000002\\
-0.45	-0.999999999999991\\
-0.44	-1\\
-0.43	-1\\
-0.42	-0.999999999999998\\
-0.41	-0.999999999999999\\
-0.4	-1.00000000000003\\
-0.39	-0.999999999999999\\
-0.38	-0.999999999999998\\
-0.37	-1\\
-0.36	-1\\
-0.35	-0.999999999999991\\
-0.34	-1.00000000000002\\
-0.33	-1.00000000000001\\
-0.32	-0.999999999999996\\
-0.31	-1\\
-0.3	-0.999999999999993\\
-0.29	-1.00000000000002\\
-0.28	-0.99999999999995\\
-0.27	-1.00000000000013\\
-0.26	-1.00000000000002\\
-0.25	-0.999999999998182\\
-0.24	-1.00000000000507\\
-0.23	-0.999999999987504\\
-0.22	-1.00000000002264\\
-0.21	-1.00000000000509\\
-0.2	-0.999999999646963\\
-0.19	-1.00000000098341\\
-0.18	-0.999999997576782\\
-0.17	-1.00000000439004\\
-0.16	-1.00000000098341\\
-0.15	-0.999999931514404\\
-0.14	-1.00000019077634\\
-0.13	-0.999999529907697\\
-0.12	-1.00000085164499\\
-0.11	-1.00000019077635\\
-0.1	-0.999986714141449\\
-0.09	-1.00003700962759\\
-0.08	-0.999908804516933\\
-0.07	-1.00016521473821\\
-0.0599999999999999	-1.00003700962757\\
-0.0499999999999999	-0.997422611928567\\
-0.04	-1.00717967697244\\
-0.03	-0.982308546376036\\
-0.02	-1.03205080756889\\
-0.01	-1.00717967697244\\
0	-0.500000000000006\\
0.01	0.00717967697244942\\
0.02	0.0320508075688775\\
0.03	-0.0176914536239792\\
0.04	0.0071796769724491\\
0.0499999999999999	-0.00257738807143573\\
0.0599999999999999	3.70096275710883e-05\\
0.07	0.00016521473821582\\
0.08	-9.11954830736072e-05\\
0.09	3.7009627571105e-05\\
0.1	-1.32858585415591e-05\\
0.11	1.90776345260756e-07\\
0.12	8.51644991003949e-07\\
0.13	-4.70092300482371e-07\\
0.14	1.90776345260786e-07\\
0.15	-6.84856266959482e-08\\
0.16	9.83409487869983e-10\\
0.17	4.39003894992058e-09\\
0.18	-2.42321997418094e-09\\
0.19	9.83409487869762e-10\\
0.2	-3.53037454934731e-10\\
0.21	5.09538594750984e-12\\
0.22	2.25652806246948e-11\\
0.23	-1.23745087296713e-11\\
0.24	5.09538594751163e-12\\
0.25	-3.63956139107892e-12\\
0.26	5.09538594750949e-12\\
0.27	-1.23745087296662e-11\\
0.28	2.25652806246862e-11\\
0.29	5.09538594750596e-12\\
0.3	-3.53037454934608e-10\\
0.31	9.83409487869415e-10\\
0.32	-2.42321997418013e-09\\
0.33	4.39003894991911e-09\\
0.34	9.8340948786951e-10\\
0.35	-6.84856266959296e-08\\
0.36	1.90776345260739e-07\\
0.37	-4.70092300482266e-07\\
0.38	8.51644991003784e-07\\
0.39	1.90776345260681e-07\\
0.4	-1.32858585415567e-05\\
0.41	3.70096275710989e-05\\
0.42	-9.11954830735951e-05\\
0.43	0.000165214738215796\\
0.44	3.70096275711023e-05\\
0.45	-0.00257738807143553\\
0.46	0.00717967697244856\\
0.47	-0.0176914536239783\\
0.48	0.0320508075688772\\
0.49	0.00717967697244643\\
0.5	-0.500000000000002\\
0.51	-1.00717967697244\\
0.52	-1.03205080756888\\
0.53	-0.982308546376027\\
0.54	-1.00717967697244\\
0.55	-0.997422611928579\\
0.56	-1.00003700962757\\
0.57	-1.00016521473821\\
0.58	-0.999908804516929\\
0.59	-1.00003700962758\\
0.6	-0.999986714141448\\
0.61	-1.00000019077636\\
0.62	-1.00000085164499\\
0.63	-0.999999529907695\\
0.64	-1.00000019077634\\
0.65	-0.999999931514388\\
0.66	-1.0000000009834\\
0.67	-1.00000000439004\\
0.68	-0.999999997576787\\
0.69	-1.0000000009834\\
0.7	-0.999999999646967\\
0.71	-1.00000000000508\\
0.72	-1.00000000002263\\
0.73	-0.999999999987509\\
0.74	-1.00000000000508\\
0.75	-0.999999999998175\\
0.76	-1.00000000000002\\
0.77	-1.00000000000012\\
0.78	-0.99999999999994\\
0.79	-1.00000000000002\\
0.8	-1.00000000000001\\
0.81	-0.999999999999999\\
0.82	-0.999999999999996\\
0.83	-1\\
0.84	-1.00000000000001\\
0.85	-0.99999999999999\\
0.86	-1.00000000000001\\
0.87	-1\\
0.88	-0.999999999999995\\
0.89	-0.999999999999999\\
0.9	-1.00000000000001\\
0.91	-0.999999999999995\\
0.92	-1.00000000000001\\
0.93	-1.00000000000001\\
0.94	-0.999999999999995\\
0.95	-0.999999999999995\\
0.96	-1.00000000000001\\
0.97	-1\\
0.98	-1\\
0.99	-1.00000000000001\\
1	-0.999999999999994\\
};
\addlegendentry{approx initial}

\addplot [color=red, dashed]
  table[row sep=crcr]{%
-1	-1\\
-0.986577181208054	-1\\
-0.973154362416107	-1\\
-0.959731543624161	-1\\
-0.946308724832215	-1\\
-0.932885906040268	-1\\
-0.919463087248322	-1\\
-0.906040268456376	-1\\
-0.89261744966443	-1\\
-0.879194630872483	-1\\
-0.865771812080537	-1\\
-0.852348993288591	-1\\
-0.838926174496644	-1\\
-0.825503355704698	-1\\
-0.812080536912752	-1\\
-0.798657718120805	-1\\
-0.785234899328859	-1\\
-0.771812080536913	-1\\
-0.758389261744966	-1\\
-0.74496644295302	-1\\
-0.731543624161074	-1\\
-0.718120805369127	-1\\
-0.704697986577181	-1\\
-0.691275167785235	-1\\
-0.677852348993289	-1\\
-0.664429530201342	-1\\
-0.651006711409396	-1\\
-0.63758389261745	-1\\
-0.624161073825503	-1\\
-0.610738255033557	-1\\
-0.597315436241611	-1\\
-0.583892617449664	-1\\
-0.570469798657718	-1\\
-0.557046979865772	-1\\
-0.543624161073825	-1\\
-0.530201342281879	-1\\
-0.516778523489933	-1\\
-0.503355704697987	-1\\
-0.48993288590604	-1\\
-0.476510067114094	-1\\
-0.463087248322148	-1\\
-0.449664429530201	-1\\
-0.436241610738255	-1\\
-0.422818791946309	-1\\
-0.409395973154362	-1\\
-0.395973154362416	-1\\
-0.38255033557047	-1\\
-0.369127516778524	-1\\
-0.355704697986577	-1\\
-0.342281879194631	-1\\
-0.328859060402685	-1\\
-0.315436241610738	-1\\
-0.302013422818792	-1\\
-0.288590604026846	-1\\
-0.275167785234899	-1\\
-0.261744966442953	-1\\
-0.248322147651007	-1\\
-0.23489932885906	-1\\
-0.221476510067114	-1\\
-0.208053691275168	-1\\
-0.194630872483222	-1\\
-0.181208053691275	-1\\
-0.167785234899329	-1\\
-0.154362416107383	-1\\
-0.140939597315436	-1\\
-0.12751677852349	-1\\
-0.114093959731544	-1\\
-0.100671140939597	-1\\
-0.087248322147651	-1\\
-0.0738255033557047	-1\\
-0.0604026845637584	-1\\
-0.0469798657718121	-1\\
-0.0335570469798657	-1\\
-0.0201342281879194	-1\\
-0.00671140939597314	-1\\
0.00671140939597326	0\\
0.0201342281879195	0\\
0.0335570469798658	0\\
0.0469798657718121	0\\
0.0604026845637584	0\\
0.0738255033557047	0\\
0.087248322147651	0\\
0.100671140939597	0\\
0.114093959731544	0\\
0.12751677852349	0\\
0.140939597315436	0\\
0.154362416107383	0\\
0.167785234899329	0\\
0.181208053691275	0\\
0.194630872483222	0\\
0.208053691275168	0\\
0.221476510067114	0\\
0.23489932885906	0\\
0.248322147651007	0\\
0.261744966442953	0\\
0.275167785234899	0\\
0.288590604026846	0\\
0.302013422818792	0\\
0.315436241610738	0\\
0.328859060402685	0\\
0.342281879194631	0\\
0.355704697986577	0\\
0.369127516778524	0\\
0.38255033557047	0\\
0.395973154362416	0\\
0.409395973154362	0\\
0.422818791946309	0\\
0.436241610738255	0\\
0.449664429530201	0\\
0.463087248322148	0\\
0.476510067114094	0\\
0.48993288590604	0\\
0.503355704697987	-1\\
0.516778523489933	-1\\
0.530201342281879	-1\\
0.543624161073825	-1\\
0.557046979865772	-1\\
0.570469798657718	-1\\
0.583892617449664	-1\\
0.597315436241611	-1\\
0.610738255033557	-1\\
0.624161073825503	-1\\
0.63758389261745	-1\\
0.651006711409396	-1\\
0.664429530201342	-1\\
0.677852348993289	-1\\
0.691275167785235	-1\\
0.704697986577181	-1\\
0.718120805369127	-1\\
0.731543624161074	-1\\
0.74496644295302	-1\\
0.758389261744966	-1\\
0.771812080536913	-1\\
0.785234899328859	-1\\
0.798657718120805	-1\\
0.812080536912752	-1\\
0.825503355704698	-1\\
0.838926174496644	-1\\
0.852348993288591	-1\\
0.865771812080537	-1\\
0.879194630872483	-1\\
0.892617449664429	-1\\
0.906040268456376	-1\\
0.919463087248322	-1\\
0.932885906040269	-1\\
0.946308724832215	-1\\
0.959731543624161	-1\\
0.973154362416107	-1\\
0.986577181208054	-1\\
1	-1\\
};
\addlegendentry{true initial value}

\end{axis}
\end{tikzpicture}%

\end{subfigure}
\begin{subfigure}{0.5\textwidth}
\include{test3_1}
\end{subfigure}

\caption{Initial data and approximation with $N=160$ base functions. The Gibbs phenomena is observed at the discontinuity (left). For a small set of base functions this phenomena is also visible at terminal time $T=0.6$ (right). For $80$ base functions we observe agreement with a classical finite--volume solution. 
}
\label{fig:burgermor} 
\end{figure}

In order to investigate the oscillatory behavior we show the normalized singular values of  reduced solutions for smooth and non--smooth initial data in Figure \ref{fig:sing}. The decay of the singular values  in the beginning is similar but then, as expected, the smooth initial condition shows a significant decrease in the singular values compared with the non--smooth case. This also validates the observation that a low number of base functions might not necessary be sufficient to resolve the strong discontinuities. However, as seen in Figure \ref{fig:burgermor} a model order reduction is still possible and we will investigate the found reduced basis for different initial conditions in the forthcoming section. The singular value decay of the matrix created by both solution is also shown in Figure \ref{fig:sing}. The dominant modes are also the basis of the reduced model used in the next section.

 \begin{figure}
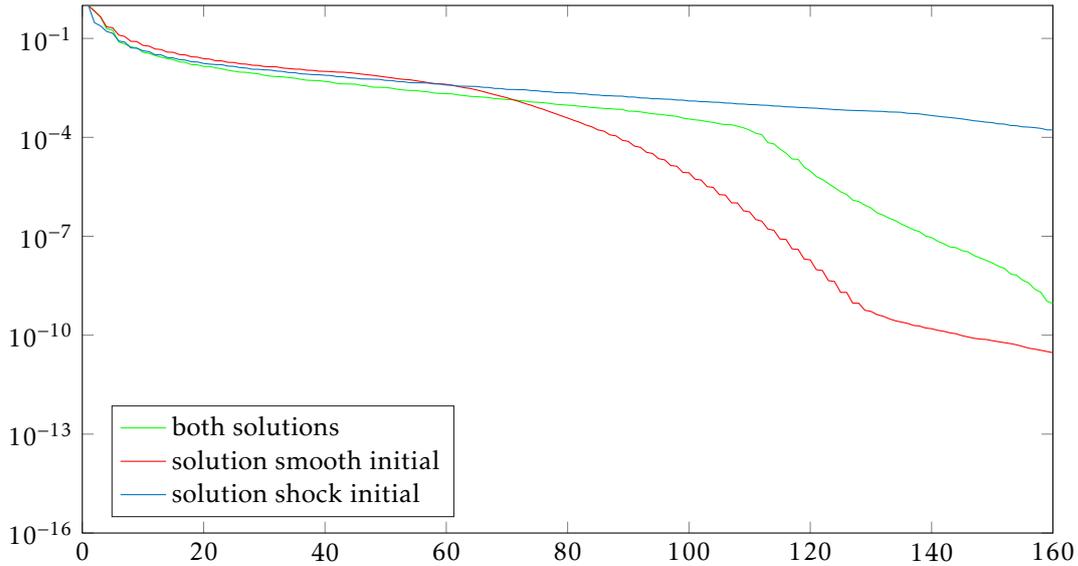

\include{sing_comp}
\caption{Singular values for the $N=160$ solution. Compared with singular values of the solution to parabolic or elliptic problems the decay is not as strong. We show the singular values for two examples: the solution to a smooth profile developing a shock (red) and for a discontinuous solution. Further, the decay for the combination both initial conditions is shown (green).  }\label{fig:sing}

\end{figure}

\subsection{Model Order Reduction for Different Initial Conditions}

We use a reduced model obtained from a combination of  the above initial conditions to predict model output for  {\em different} initial conditions. We consider the solutions to  the two different initial conditions  given in the previous section.  A reduced model from the dominant basis functions of the first two problems is obtained. The solution to this reduced system for initial data given by equation \eqref{new1} is compared with classical finite--volume integration.  The initial condition is chosen  as a linear combination of the two previous initial conditions. 

\begin{align}\label{new1} u_0=\sin(\pi x)+a\left( \chi_{0, 1/2 }(x) - 1\right) \end{align}

with $a=0.2$ Note that the solution $u(T,x)$ is {\bf not} a linear combination of the two previous solutions due to the nonlinear nature of the problem.  Hence, we  generate computational efficiency by reducing the size of the Ansatz space needed to solve for a given initial datum. Results are shown in Figure \ref{fig:test4}. The initial condition is  sinoidal with an  additional discontinuities. The reduced system solution at $T=0.3$ as well as the finite volume comparison  showing good  qualitative and quantitative agreement.  In this example we set the number of base functions as $N=160$, the dimension of the reduced space $r=80$. The further parameters are $\epsilon=10^{-3}$ as above and $\Delta t=\frac{\epsilon}{10}$. The results confirm that the chosen approach allows to efficiently apply model order reduction to hyperbolic problems.

 \begin{figure}[hbt]
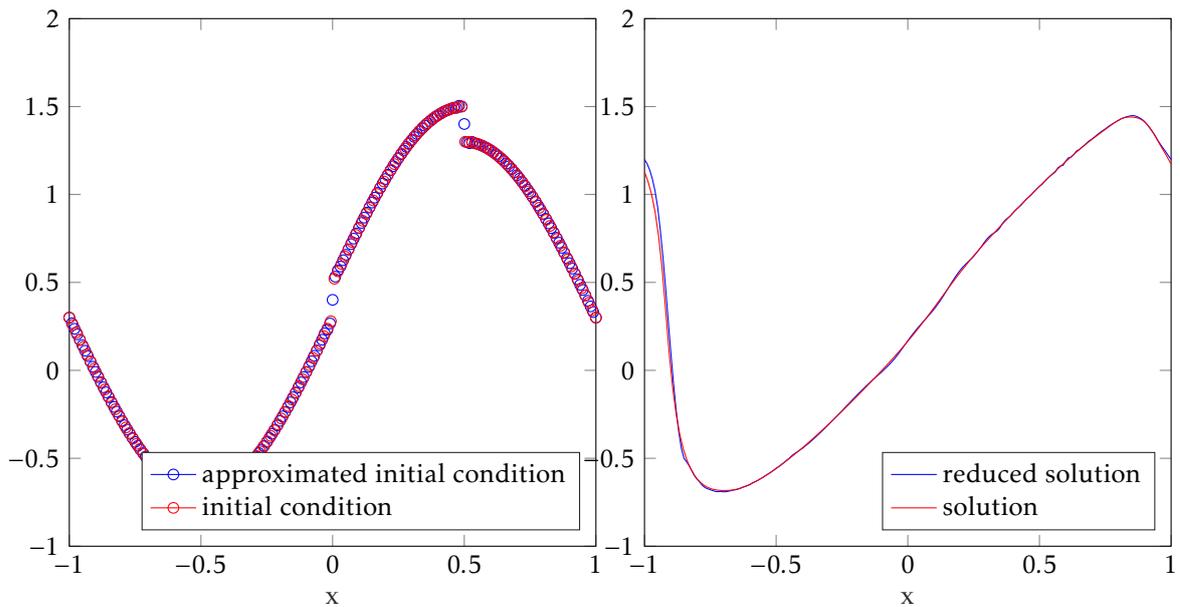

\begin{subfigure}{0.5\textwidth}
\include{test4_ini}
\end{subfigure}
\begin{subfigure}{0.5\textwidth}
\include{test4}
\end{subfigure}

\caption{Left: Initial condition on the full space and reduced space with $r=80$ out of $N=160$ base functions. Right: Solution at time $T=0.3$ on full and reduced space.
}
\label{fig:test4}
\end{figure}
\section{Summary}
We proposed a relaxation formulation of hyperbolic conservation laws that allows to use shifted base functions for a formulation that is amendable for model-order reduction. The resulting discretized scheme is reduced using snapshots in time and shows qualitative good approximation properties even in the case of shock waves. The approach has been tested on linear hyperbolic problems with nonlinear source terms but known exact solution as well as nonlinear hyperbolic problems with strong shocks. A numerical investigation of the approximation quality, the singular value decay as well as comparisons with classical finite-volume schemes have been conducted. 

\bibliographystyle{siam}
\bibliography{file,mor,papers,completeBibTex}

\subsection*{Acknowledgments}

The authors thank the Deutsche Forschungsgemeinschaft (DFG, German Research Foundation) for the financial support through 20021702/GRK2326, 333849990/IRTG-2379, HE5386/18-1,19-1,22-1 and under Germany's Excellence Strategy EXC-2023 Internet of Production 390621612. 
DFG 18,19-1. Supported also by the German Federal Ministry for Economic Affairs and Energy, in the joint project:
“MathEnergy – Mathematical Key Technologies for Evolving Energy Grids”, sub-project: Model
Order Reduction (Grant number: 0324019B).

\end{document}